\documentclass[11pt]{article}

\makeindex
\usepackage[all]{xy}
\usepackage{graphicx}

\usepackage{amsmath}
\usepackage{amsfonts}
\usepackage{amssymb}
\usepackage{amsthm}



\DeclareMathOperator{\im}{im}
\DeclareMathOperator{\coker}{coker}
\newcommand{\isom}{\cong}

\newcommand{\Z}{{\bf{Z}}}

\newcommand{\Q}{{\bf{Q}}}
\newcommand{\Qbar}{{\overline{\Q}}}
\newcommand{\R}{{\bf{R}}}
\newcommand{\C}{{\bf{C}}}

\newcommand{\T}{{\bf{T}}}

\newcommand{\m}{{\mathfrak{m}}}
\newcommand{\OO}{\mathcal{O}}
\newcommand{\NN}{\mathcal{N}}
\newcommand{\JJ}{\mathcal{J}}

\newcommand{\Aa}{\mathcal{A}}
\newcommand{\GG}{\mathcal{G}}
\newcommand{\CA}{{C_{\scriptscriptstyle{A}}}}

\newcommand{\noi}{{\noindent}}

\newcommand{\Mid}{|} 

\newcommand{\miD}{|}

\newcommand{\divs}{\!\mid\!}
\newcommand{\ndiv}{\!\nmid\!}
\newcommand{\tensor}{\otimes}

\newcommand{\ra}{{\rightarrow}}

 1
\DeclareFontEncoding{OT2}{}{} 
  \newcommand{\textcyr}[1]{%
    {\fontencoding{OT2}\fontfamily{wncyr}\fontseries{m}\fontshape{n}%
     \selectfont #1}}
\newcommand{\Sha}{{\mbox{\textcyr{Sh}}}} 

\newcommand{\mm}{{\mathfrak{m}}}

\newcommand{\ord}{{\rm ord}}

\newcommand{\annT}{{{\rm Ann_\T}}}
\newcommand{\Eis}{{\Im}}

\newcommand{\nJ}{{\eta_{\scriptscriptstyle{J}}}}

\newcommand{\comment}[1]{}

\DeclareMathOperator{\tor}{tor}

\newtheorem{lem}{Lemma}[section]
\newtheorem{cor}[lem]{Corollary}
\newtheorem{prop}[lem]{Proposition}
\newtheorem{conj}[lem]{Conjecture}
\newtheorem{thm}[lem]{Theorem}

\theoremstyle{definition}

\newtheorem{que}[lem]{Question}

\newcommand{\thetitle}
{Mod-$p$ reducibility, the torsion subgroup, and the Shafarevich-Tate group}


\begin{document}
\parindent=2em

\title{\thetitle}
\author{Amod Agashe
\footnote{This material is based upon work supported by the National Science 
Foundation under Grant No. 0603668.}}
\maketitle

\begin{abstract}
Let $E$ be an optimal elliptic curve over~$\Q$ of prime conductor~$N$.
We show that if for an odd prime~$p$, the mod~$p$ representation
associated to~$E$ is reducible
(in particular, if $p$ divides the order of the torsion
subgroup of~$E(\Q)$), then 
the $p$-primary component of the Shafarevich-Tate
group of~$E$ is trivial. 
We also state a related result for more general abelian
subvarieties of~$J_0(N)$  and 
mention what to expect
if $N$ is not prime.
\end{abstract}

\section{Introduction and results}

Let $N$ be a positive integer. Let
$X_0(N)$ be the modular curve over~$\Q$
associated to~$\Gamma_0(N)$, and
let $J=J_0(N)$ denote the Jacobian of~$X_0(N)$, which is
an abelian variety over~$\Q$. 
Let $\T$ denote the Hecke algebra, which is 
the subring of endomorphisms of~$J_0(N)$
generated by the Hecke operators (usually denoted~$T_\ell$
for $\ell \ndiv N$ and $U_p$ for $p\divs N$). 
If $f$ is a newform of weight~$2$ on~$\Gamma_0(N)$, then 
let $I_f = \annT f$ and let $A_f$ denote the associated {\em newform
quotient} $J/I_f J$, which is
an abelian variety over~$\Q$. 
If the newform~$f$ has integer Fourier coefficients, then
$A_f$ is an elliptic curve, 
and we denote it by~$E$.
It is called the {\em optimal elliptic curve}
associated to~$f$ and its conductor is~$N$ (which we may also call
the {\em level} of~$E$).
If $A$ is an abelian variety over a field~$F$, then
as usual, $\Sha(A/F)$ denotes the Shafarevich-Tate group of~$A$ over~$F$.

\begin{thm} \label{thm:mainec}
Let $E$ be an optimal elliptic curve of prime conductor~$N$.
If $p$ is an odd prime such that $E[p]$
is reducible as a ${\rm Gal}(\Qbar/\Q)$ representation
over~$\Z/p\Z$, then the $p$-primary component of~$\Sha(E)$ 
is trivial. In particular, if an odd prime~$p$ 
divides~$| E(\Q)_{\rm tor} |$, then $p$ does not divide~$|\Sha(E)|$,
assuming that $\Sha(E)$ is finite.
\end{thm}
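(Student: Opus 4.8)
The plan is to prove the stronger statement that $\Sha(E)[p]=0$; this suffices, since a nonzero $p$-primary abelian group always contains an element of order exactly $p$, so $\Sha(E)[p]=0$ forces the entire $p$-primary component $\Sha(E)[p^\infty]$ to vanish (including any divisible part). I would attack $\Sha(E)[p]$ through the $p$-descent exact sequence
\[ 0 \to E(\Q)/pE(\Q) \to \mathrm{Sel}^{(p)}(E) \to \Sha(E)[p] \to 0, \]
so that it is enough to prove $\dim_{\Fp}\mathrm{Sel}^{(p)}(E) = \dim_{\Fp} E(\Q)/pE(\Q)$, i.e. that every $p$-Selmer class already comes from a global point.

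The reducibility hypothesis gives a short exact sequence of ${\rm Gal}(\Qbar/\Q)$-modules
\[ 0 \to C \to E[p] \to D \to 0, \]
where $C$ and $D$ are one-dimensional over $\Fp$, afforded by characters $\alpha$ and $\beta$ with $\alpha\beta=\chi_p$ the mod-$p$ cyclotomic character (by the Weil pairing). The first essential input is to pin these characters down. Since $N$ is prime, $E$ has good reduction outside $N$ and multiplicative reduction at $N$, so $\alpha$ and $\beta$ are unramified outside $\{p,N\}$ with severely restricted ramification; Mazur's study of the Eisenstein ideal and of rational isogenies of prime degree for prime level then forces $\{\alpha,\beta\}=\{\mathbf{1},\chi_p\}$, so that up to semisimplification $E[p]\sim\mathbf{1}\oplus\mu_p$ and, after identifying the stable line $C$ (and, if necessary, replacing $E$ by its $p$-isogenous curve), $C\isom\Z/p\Z$ with trivial action and $D\isom\mu_p$.

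Next I would run the descent through this filtration. The cohomology sequence
\[ H^1(\Q,C) \to H^1(\Q,E[p]) \to H^1(\Q,D) \]
lets me propagate the local Selmer conditions defining $\mathrm{Sel}^{(p)}(E)$ down to local conditions on $C=\Z/p\Z$ and $D=\mu_p$. The key observation is that the resulting Selmer-type subgroups of $H^1(\Q,\Z/p\Z)=\Hom({\rm Gal}(\Qbar/\Q),\Z/p\Z)$ and of $H^1(\Q,\mu_p)\isom\Q^*/(\Q^*)^p$ are controlled entirely by the arithmetic of $\Q$ itself: the $\mu_p$-side is generated by the classes of $-1$, $p$, and $N$, and since $p$ is odd the class of $-1$ is trivial; the $\Z/p\Z$-side parametrizes $\Z/p\Z$-extensions of $\Q$ unramified outside $\{p,N\}$, which are governed by the trivial class group $\mathrm{Cl}(\Q)$ together with the local behavior at $p$ and $N$. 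Imposing the Selmer local conditions at $p$ and at $N$ cuts these groups down so far that the image of $\mathrm{Sel}^{(p)}(E)$ in each is accounted for by $E(\Q)/pE(\Q)$ alone, yielding the desired equality of dimensions and hence $\Sha(E)[p]=0$.

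The main obstacle I anticipate is the bookkeeping of the local conditions at the two bad primes through the connecting maps, because the extension $0\to C\to E[p]\to D\to 0$ need not split: concretely, I must control the image of $\mathrm{Sel}^{(p)}(E)$ in the $\mu_p$-part $H^1(\Q,\mu_p)$ — which is entangled with the Tamagawa number of $E$ at $N$ and with the behavior of $E$ at $p$ — and show that the one potentially surviving class there genuinely arises from a rational point rather than from $\Sha$. This is precisely where primality of $N$ and Mazur's sharp description of the Eisenstein structure are indispensable, and where I expect the real work to lie. The final assertion is then immediate: if an odd prime $p$ divides $|E(\Q)_{\rm tor}|$ then $E(\Q)[p]\neq 0$, so $E[p]$ contains a trivial Galois submodule and is therefore reducible; the first part gives $\Sha(E)[p^\infty]=0$, and when $\Sha(E)$ is finite this says exactly that $p\nmid|\Sha(E)|$.
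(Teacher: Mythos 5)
Your strategy is genuinely different from the paper's: you propose a direct $p$-descent through the filtration $0\to C\to E[p]\to D\to 0$, whereas the paper reduces everything to the maximal ideal $\m=(p,I_f)$ of the Hecke algebra, invokes Mazur's result that reducibility forces $\Eis\subseteq\m$, and then kills $\Sha(E)[\m]$ by showing that a local generator $\eta=1+\ell-T_\ell$ of $\Eis\T_\m$ acts injectively on the $fppf$ cohomology $H^1({\rm Spec}\,\Z,\mathcal{A})$ of the N\'eron model, the decisive arithmetic inputs being Mazur's computation of $\ker\eta$ on $J_0(N)$ (cuspidal plus Shimura subgroups) and Emerton's identification of the component group at~$N$ with the intersection with the cuspidal subgroup. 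Your preliminary reductions are sound: $\Sha(E)[p]=0$ does force the whole $p$-primary part to vanish, the semisimplification of $E[p]$ is indeed $\mathbf{1}\oplus\chi_p$ for a semistable curve and odd~$p$, and the deduction of the ``in particular'' clause is correct.

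The proposal nonetheless has a genuine gap at its center. The assertion that, after imposing the local conditions, ``the image of ${\rm Sel}^{(p)}(E)$ in each is accounted for by $E(\Q)/pE(\Q)$ alone'' is the theorem restated, and no argument is supplied for it; you explicitly defer ``the real work'' of controlling the image in $H^1(\Q,\mu_p)$ and the contribution of $H^1(\Q,C)$. That work is not routine bookkeeping: the induced local conditions on $C$ and $D$ are not simply ``unramified outside $\{p,N\}$'' but depend on images of local points, on whether the multiplicative reduction at~$N$ is split, on $\ord_N(j)$ mod~$p$, and on the ordinary structure at~$p$; the unramified-outside-$\{p,N\}$ subgroup of $\Q^\times/(\Q^\times)^p$ is two-dimensional, and whether the local conditions annihilate it is exactly the delicate point that the paper resolves via Emerton's component-group theorem, for which your sketch offers no substitute. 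A second, more localized error: ``replacing $E$ by its $p$-isogenous curve'' is not harmless, since $\Sha[p]$ is not an isogeny invariant and the theorem concerns the optimal curve specifically (optimality is precisely what Emerton's input requires); if the stable line is $\mu_p$ you must run the descent on $E$ itself with the roles of $C$ and $D$ exchanged rather than pass to the quotient curve. As written, the proposal is a plausible research plan rather than a proof.
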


We shall soon deduce this theorem from 
a more general result involving abelian subvarieties
of~$J_0(N)$, which we discuss next. 
The Eisenstein ideal~$\Eis$ of~$\T$ is the ideal  generated
by $1 + W_N$ and $1+\ell - T_\ell$ for all primes $\ell \ndiv N$,
where $W_N$ denotes the Atkin-Lehner involution.
Prop.~II.14.2 in~\cite{mazur:eis} implies:

\begin{prop}[Mazur] \label{prop:mazur}
Let $\m$ be a maximal ideal of~$\T$ with odd residue characteristic.
If $J_0(N)[\m]$ is reducible as a ${\rm Gal}(\Qbar/\Q)$ representation
over~$\T/\m$, then $\Eis \subseteq \m$.
\end{prop}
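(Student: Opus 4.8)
The plan is to show that reducibility forces the system of Hecke eigenvalues attached to~$\m$ to coincide with that of the weight-two Eisenstein series on~$\Gamma_0(N)$, which is exactly the assertion $\Eis \subseteq \m$. Write $k = \T/\m$, a finite field of odd characteristic~$p$, and let $\bar\rho_\m \colon {\rm Gal}(\Qbar/\Q) \to {\rm GL}_2(k)$ be the semisimple representation attached to~$\m$, characterized by being unramified outside~$Np$ with ${\rm tr}\,\bar\rho_\m(\Frob_\ell) \equiv T_\ell$ and $\det \bar\rho_\m(\Frob_\ell) \equiv \ell \pmod{\m}$ for $\ell \ndiv Np$; in particular $\det \bar\rho_\m = \chi_p$, the mod-$p$ cyclotomic character. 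Reducibility of $J_0(N)[\m]$ means $\bar\rho_\m = \phi_1 \oplus \phi_2$ for two characters $\phi_i\colon {\rm Gal}(\Qbar/\Q) \to k^\times$ with $\phi_1 \phi_2 = \chi_p$. The heart of the argument is to prove $\{\phi_1, \phi_2\} = \{1, \chi_p\}$.

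First I would pin down the~$\phi_i$ using ramification. Because $k^\times$ has order prime to~$p$, each $\phi_i$ is tamely ramified everywhere and, being unramified outside~$Np$, factors through a tame abelian extension of~$\Q$ ramified only at~$N$ and~$p$. Since $N$ is prime, $J_0(N)$ has purely toric reduction at~$N$, so inertia $I_N$ acts unipotently on the Tate module and hence trivially on the semisimple module~$\bar\rho_\m$; thus both $\phi_i$ are unramified at~$N$. Each $\phi_i$ is then unramified outside~$p$ and tame at~$p$, so (as $\Q$ has no everywhere-unramified extension) factors through ${\rm Gal}(\Q(\zeta_p)/\Q) \cong (\Z/p\Z)^\times$; that is, $\phi_i = \chi_p^{a_i}$ with $a_1 + a_2 \equiv 1 \pmod{p-1}$. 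Next, taking $p \ne N$ (the case $p = N$ I would dispose of separately), $J_0(N)$ has good reduction at~$p$, so $J_0(N)[\m]$ prolongs to a finite flat group scheme over~$\Z_p$ killed by~$p$; Raynaud's classification (using that $p$ is odd, so the absolute ramification~$1$ is $< p-1$) forces the action of~$\Ip$ on each Jordan--H\"older factor to be through $\chi_p^{m_i}$ with $m_i \in \{0,1\}$, whence $m_1 + m_2 = 1$ gives $\{m_1, m_2\} = \{0,1\}$. Since $\chi_p$ restricted to~$\Ip$ has order $p-1$, this determines $\{a_1, a_2\} = \{0,1\}$, i.e. $\{\phi_1, \phi_2\} = \{1, \chi_p\}$.

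With the characters identified, the generators $1 + \ell - T_\ell$ are immediate: comparing traces of~$\Frob_\ell$ for $\ell \ndiv Np$ gives $T_\ell \equiv \phi_1(\Frob_\ell) + \phi_2(\Frob_\ell) = 1 + \ell \pmod{\m}$, so $1 + \ell - T_\ell \in \m$. It remains to treat the generator $1 + W_N$. Since $N$ is prime, every weight-two form on~$\Gamma_0(N)$ is a newform and $U_N = -W_N$, so this amounts to showing $U_N \equiv 1 \pmod{\m}$.

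The step I expect to be the main obstacle is precisely this last one, because the semisimplification alone does not detect~$U_N$. At~$N$ the local representation is an extension with unramified characters $\chi_p\beta$ and $\beta^{-1}$ on the graded pieces, $\beta$ unramified and the unramified quotient carrying the action of~$\Frob_N$ by~$U_N$; matching $\{\chi_p\beta, \beta^{-1}\}$ against $\{1,\chi_p\}$ restricted to the decomposition group at~$N$ forces $U_N \equiv 1$ when $N \not\equiv -1 \pmod p$, but leaves open the borderline case $N \equiv -1 \pmod p$ (where $\chi_p(\Frob_N) = N \equiv -1$ and $U_N^2 = 1$ permit $U_N \equiv -1$). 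To settle it I would pass from the semisimplification back to the genuine ${\rm Gal}(\Qbar/\Q)$-stable line $L \subset J_0(N)[\m]$ supplied by reducibility and identify it with one of the two canonical Eisenstein subgroup schemes of~$J_0(N)$: if $L$ is \'etale (character~$1$) it should arise from the cuspidal group generated by $(0)-(\infty)$, and if $L$ is of multiplicative type (character~$\chi_p$) from the Shimura subgroup. Both are annihilated by~$\Eis$ and have order equal to the numerator of $(N-1)/12$, so the presence of $p$-torsion in them forces $p \mid {\rm num}((N-1)/12)$, which yields $U_N \equiv 1$ and $\Eis \subseteq \m$, and incidentally shows that the borderline configuration never occurs (for $N \equiv -1 \pmod p$ one has $p \nmid N-1$, hence $p \nmid {\rm num}((N-1)/12)$). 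Carrying out this identification is the crux, and is where Mazur's detailed analysis of the subgroup structure of~$J_0(N)$ enters.
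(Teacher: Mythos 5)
The paper itself offers no proof of this proposition: it is stated as an immediate consequence of Prop.~II.14.2 of \cite{mazur:eis}, so your attempt has to be measured against Mazur's argument there. The character analysis that occupies most of your proposal (inertia at~$N$ acts unipotently because $J_0(N)$ has purely toric reduction, hence trivially on the semisimplification; the finite flat model at~$p$ together with Raynaud/Oort--Tate forces exponents in $\{0,1\}$; Kronecker--Weber then pins down $\{\phi_1,\phi_2\}=\{1,\chi_p\}$ and gives $1+\ell-T_\ell\in\m$ for $\ell\nmid Np$) is correct and is indeed the core of Mazur's treatment. But there is a genuine gap at your very first step: reducibility of the \emph{module} $J_0(N)[\m]$ does not give you a decomposition $\bar\rho_\m=\phi_1\oplus\phi_2$. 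The space $J_0(N)[\m]$ is a $\T/\m$-vector space whose dimension is a priori only bounded below by~$2$; if that dimension exceeds~$2$, the module is automatically reducible even when the two-dimensional representation $\bar\rho_\m$ is irreducible (one would have $J_0(N)[\m]\cong\bar\rho_\m^{\oplus t}$ with $t\geq 2$, with no one-dimensional constituent, and your argument never starts). Ruling this out is precisely Mazur's multiplicity-one theorem ($\dim_{\T/\m}J_0(N)[\m]=2$ for odd residue characteristic), proved via the $q$-expansion principle and the Gorenstein property of~$\T_\m$, and it is the essential content of the result the paper cites. The module-theoretic reading of ``reducible'' is the one actually used: the proof of Corollary~\ref{cor:main} passes from reducibility of the submodule $A[\m]$ to reducibility of $J_0(N)[\m]$, which only makes sense for modules.

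The second gap is the generator $1+W_N$, which you flag but do not close. The proposed completion --- identifying the Galois-stable line with the cuspidal or Shimura subgroup --- is itself a substantial theorem of Mazur (rational $p$-torsion lies in the cuspidal group; $\mu$-type subgroups lie in the Shimura subgroup), proved with machinery at least as heavy as the proposition at issue, and no argument for it is given. Two smaller omissions in the same direction: the ideal $\Eis$ as defined in the paper contains $1+p-T_p$ when $p\nmid N$, and the Chebotarev/trace argument only reaches $\ell\nmid Np$, so at $\ell=p$ one still needs the Eichler--Shimura congruence on the finite flat model to see $T_p\equiv 1+p\pmod\m$; and the case $p=N$ is set aside without an argument. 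In sum, the proposal reconstructs the right ideas for the ``two characters'' half of Mazur's proof, but as written it does not establish the statement.
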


\begin{thm} \label{thm:main}
Suppose that $N$ is prime.
Let $A$ be an abelian subvariety
of~$J_0(N)$ that is stable under the action of the Hecke algebra~$\T$.
If $\m$ is a  maximal ideal of~$\T$
containing~$\Eis$ and having odd residue characteristic~$p$, 
then $\Sha(A)[\m]=0$ (equivalently
$\Sha(A)_\m \tensor_{\T_p} \T_\m = 0$).
\end{thm}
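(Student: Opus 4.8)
The plan is to convert the assertion into a Galois-cohomological one and then to exploit how strongly the hypothesis $\Eis\subseteq\m$ constrains the module $A[\m]$. The two formulations of the conclusion are equivalent by a standard socle argument for finite $\T_p$-modules (the $\m$-torsion is nonzero exactly when the localization at $\m$ is), so I would work with $\Sha(A)[\m]$. A class $c\in\Sha(A)[\m]$ is by definition everywhere locally trivial and killed by~$\m$; since $A[\m]\subseteq J_0(N)[\m]$, the Kummer map identifies such a $c$ with a class in $H^1(\Q,A[\m])$ whose localizations lie in the images of the local points, and standard descent gives an exact sequence
$$0\lra A(\Q)/\m A(\Q)\lra \mathrm{Sel}_\m(A)\lra \Sha(A)[\m]\lra 0,$$
where $\mathrm{Sel}_\m(A)\subseteq H^1(\Q,A[\m])$ is the $\m$-Selmer group. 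Thus it suffices to prove that $\mathrm{Sel}_\m(A)$ is no larger than the image of the Mordell--Weil group, i.e. that the Eisenstein hypothesis produces no Selmer classes beyond those explained by rational points.

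The key structural input is that, because $N$ is prime and $\Eis\subseteq\m$, Mazur's analysis of the Eisenstein ideal in~\cite{mazur:eis} forces $J_0(N)[\m]$, hence its submodule $A[\m]$, to be reducible with Jordan--Hölder factors the trivial module $\Z/p\Z$ and the mod-$p$ cyclotomic module $\mu_p$. I would fix a $G_\Q$-stable filtration exhibiting these characters and run the long exact cohomology sequence, reducing the estimate of $\mathrm{Sel}_\m(A)$ to estimates of the subgroups of $H^1(\Q,\mu_p)$ and $H^1(\Q,\Z/p\Z)$ cut out by the local conditions. Over $\Q$ these are very restrictive: by Kummer theory $H^1(\Q,\mu_p)\isom\Q^\times/(\Q^\times)^p$, and the Selmer conditions permit ramification only at $N$ and $p$, so a surviving class is represented by an element of $\Q^\times$ supported on $N$ and $p$ (the class of $-1$ being trivial since $p$ is odd); as $\Q$ has class number~$1$ this leaves only the explicit classes of $N$ and $p$. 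Likewise $H^1(\Q,\Z/p\Z)=\Hom(G_\Q,\F_p)$ is governed by the degree-$p$ subextensions of $\Q(\zeta_N,\zeta_p)/\Q$. The relation $1+W_N\in\Eis\subseteq\m$, i.e. $W_N\equiv-1\pmod\m$, pins down the local behaviour at the prime~$N$ of multiplicative reduction, and is what should eliminate the class coming from~$N$.

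The main obstacle I expect is precisely the local analysis that welds these two pieces together. One must (i) compute the local Selmer conditions at $N$ and at $p$ for each graded piece and check that the candidate global classes above fail at least one local condition unless they are already accounted for by $A(\Q)$, and (ii) control the extension class of $A[\m]$, namely the gluing of $\mu_p$ and $\Z/p\Z$, so that the connecting maps in the long exact sequence do not manufacture extra Selmer classes from the individual pieces. At $p$ this requires knowing the position of $\mu_p$ in the ordinary filtration of $A[\m]$; at $N$ it requires the precise structure of the toric reduction together with $W_N\equiv-1$. Once the surviving Selmer classes are shown to coincide with the image of the rational points, the displayed exact sequence yields $\Sha(A)[\m]=0$.
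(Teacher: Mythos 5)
Your strategy is genuinely different from the paper's: the paper never forms a Selmer group or computes Galois cohomology of the graded pieces of $A[\m]$. Instead it picks a good prime $\ell$, uses Mazur's result that $\Eis\T_\m$ is principal and generated by $\eta=1+\ell-T_\ell$, and shows that $\eta$ acts injectively on the fppf cohomology group $H^1({\rm Spec}\,\Z,\Aa)$ of the N\'eron model, which contains $\Sha(A)$. The two essential inputs are (a) Mazur's computation $\ker(\eta|_{J_0(N)})=C_p\oplus\Sigma_p$ (cuspidal plus Shimura) together with the vanishing of $H^1({\rm Spec}\,\Z,-)$ on admissible group schemes, and (b) Emerton's theorem that the component group of~$\Aa$ at~$N$ is constant and isomorphic via specialization to~$A\cap C_p$, which drives a counting argument. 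Input (b) is precisely what extends Mazur's III.3.6 from $J_0(N)$ to an arbitrary Hecke-stable subvariety~$A$.

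As written, however, your proposal is a plan rather than a proof: the load-bearing part --- the local conditions at $N$ and at $p$, the control of the extension class gluing $\mu_p$ to $\Z/p\Z$, and the comparison of the surviving global classes with the image of $A(\Q)/\m A(\Q)$ --- is explicitly deferred in your last paragraph, and none of it is routine. The candidate class in $H^1(\Q,\Z/p\Z)$ ramified at~$N$ genuinely exists in the Eisenstein situation (there $p$ divides $N-1$), so whether it survives the Selmer condition depends on the exact image of $A(\Q_N)$ in local cohomology, i.e.\ on the component group of~$\Aa$ at~$N$; that is the content of Emerton's theorem, for which your sketch supplies no substitute. Moreover, to conclude $\Sha(A)[\m]=0$ from your descent sequence you must show the Selmer group equals (not merely contains) the image of $A(\Q)/\m A(\Q)$, which requires identifying $A(\Q)_\m$ --- another nontrivial input, related to the cuspidal subgroup --- rather than only listing the possible Kummer classes. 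Two smaller points: $A[\m]$ is only a submodule of $J_0(N)[\m]$, so it may have a single Jordan--H\"older constituent (or a priori be zero), and your filtration argument should not presuppose that both $\Z/p\Z$ and $\mu_p$ occur; and the extension class of $A[\m]$ at~$p$ (the position of $\mu_p$ in the finite flat structure) must be pinned down before the connecting maps can be controlled. Until these computations are carried out the argument is incomplete.
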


We shall prove this theorem in Section~\ref{proof}.
For now, we just remark that our proof follows that 
of~\cite[III.3.6]{mazur:eis} (which proves 
proves the theorem above for the case where $A = J_0(N)$), 
with an extra input coming 
from~\cite{emerton:optimal}.

\begin{cor} \label{cor:main}
Suppose that $N$ is prime.
Let $A$ be an abelian subvariety
of~$J_0(N)$ that is stable under the action of the Hecke algebra~$\T$.
If $\m$ is maximal ideal of~$\T$
with odd residue characteristic  such that $A[\m]$
is reducible as a ${\rm Gal}(\Qbar/\Q)$ representation
over~$\T/\m$, then $\Sha(A)[\m]=0$ (equivalently
$\Sha(A)_\m \tensor_{\T_p} \T_\m = 0$).
\end{cor}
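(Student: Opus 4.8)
The plan is to reduce Corollary~\ref{cor:main} to Theorem~\ref{thm:main} by way of Mazur's Proposition~\ref{prop:mazur}, both of which are already at our disposal. The proposition converts reducibility of the \emph{full} module $J_0(N)[\m]$ into the containment $\Eis \subseteq \m$, and the theorem then produces the vanishing $\Sha(A)[\m]=0$. Since the hypothesis we are handed concerns only $A[\m]$ and not $J_0(N)[\m]$, the single thing I would need to establish is the implication
\[
A[\m] \text{ reducible} \ \Longrightarrow \ J_0(N)[\m] \text{ reducible},
\]
after which the two cited results chain together to give the conclusion, and the equivalence with $\Sha(A)_\m \tensor_{\T_p}\T_\m=0$ is inherited verbatim from Theorem~\ref{thm:main}.

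To prove this implication I would first observe that $A[\m]$ sits inside $J_0(N)[\m]$ as a sub-representation. Indeed, the inclusion $A \hookrightarrow J_0(N)$ is a closed immersion of abelian varieties over~$\Q$ that is compatible with the action of~$\T$, so on $p$-torsion (where $p$ is the residue characteristic of~$\m$, and $p \in \m$) it induces a ${\rm Gal}(\Qbar/\Q)$- and $\T$-equivariant injection $A[p] \hookrightarrow J_0(N)[p]$. Intersecting with the $\m$-torsion gives $A[\m] = A[p]\cap J_0(N)[\m] \subseteq J_0(N)[\m]$, an inclusion of $\T/\m$-vector spaces that is stable under the Galois action; in other words, $A[\m]$ is a $\T/\m[{\rm Gal}(\Qbar/\Q)]$-submodule of $J_0(N)[\m]$. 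Now, if $A[\m]$ is reducible there is a proper nonzero $\T/\m[{\rm Gal}(\Qbar/\Q)]$-submodule $W \subsetneq A[\m]$. Then $W$ is nonzero, and since $W \subsetneq A[\m] \subseteq J_0(N)[\m]$ we have $\dim_{\T/\m} W < \dim_{\T/\m} J_0(N)[\m]$, so $W$ is also a proper submodule of $J_0(N)[\m]$. Hence $J_0(N)[\m]$ is reducible. By Proposition~\ref{prop:mazur} this forces $\Eis \subseteq \m$, and then Theorem~\ref{thm:main}—whose remaining hypotheses ($N$ prime, $A$ a $\T$-stable abelian subvariety, $\m$ of odd residue characteristic) are exactly those assumed here—yields $\Sha(A)[\m]=0$.

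I do not anticipate a genuine obstacle in this argument: the only real content is the passage from $A[\m]$ to $J_0(N)[\m]$, and that is a formal consequence of $A[\m]$ being a subobject of $J_0(N)[\m]$ in the category of $\T/\m[{\rm Gal}(\Qbar/\Q)]$-modules. All of the substantive work—Mazur's analysis of the Eisenstein ideal and the Shafarevich-Tate argument feeding into Theorem~\ref{thm:main}—has already been carried out upstream, so the corollary amounts to combining those inputs with this elementary transfer of reducibility along the inclusion $A \hookrightarrow J_0(N)$.
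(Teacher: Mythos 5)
Your proposal is correct and follows exactly the paper's own argument: observe that $A[\m]$ is a sub-representation of $J_0(N)[\m]$, so reducibility of the former forces reducibility of the latter, then invoke Proposition~\ref{prop:mazur} to get $\Eis \subseteq \m$ and conclude via Theorem~\ref{thm:main}. The paper states this in three sentences; you have merely spelled out the (correct) elementary details of the transfer of reducibility.
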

\begin{proof}
Since $A[\m]$ is a sub-representation of~$J_0(N)[\m]$,
we see that $J_0(N)[\m]$ is reducible as well.
Then by Proposition~\ref{prop:mazur}, we see that $\Eis \subseteq \m$.
The corollary now follows from Theorem~\ref{thm:main}.
\end{proof}

We remark that if $A$ is as in the corollary above, then
we do not expect that 
a statement analogous to the first conclusion of 
Theorem~\ref{thm:mainec} holds if $A$ is not an elliptic curve,
i.e., it may not be true that if $p$ is an odd prime such
that $A[p]$
is reducible as a ${\rm Gal}(\Qbar/\Q)$ representation
over~$\Z/p \Z$, then $\Sha(A)[p]=0$, although we do not
know a counterexample.

\begin{proof}[Proof of Theorem~\ref{thm:mainec}]
\comment{
Recall that the theorem says that if 
$E$ is an optimal elliptic curve of prime conductor~$N$, and .
$p$ is an odd prime such that $E[p]$
is reducible as a ${\rm Gal}(\Qbar/\Q)$ representation
over~$\Z/p\Z$, then the $p$-primary component of~$\Sha(E)$ 
is trivial. 

Consider the surjective map $\phi: \T \ra \Z$ given by 
$T_\ell \mapsto a_\ell(f)$ for all primes~$\ell$, where
$f$ is the newform corresponding to~$E$. Let $\m$ denote
the kernel of the composite $\T \stackrel{\phi}{\ra} \Z \ra \Z/p\Z$,
which is a surjection. Thus $\m$ is a maximal ideal containing~$p$
and~$I_f$ and $\T /\m \isom \Z/ p\Z$.  
Then $E[\m] = E[p]$ is reducible over~$\T /\m \isom \Z/ p\Z$. 

The theorem follows essentially from
Corollary~\ref{cor:main}, in view of the fact that the Hecke
operators act on~$E$ as multiplication by integers, as we
now explain.

Then since $p \in \m$, we have $E[\m] \subseteq E[p]$,
and since $\m$ is the annhilator of~$E[p]$, we have $E[p] \subseteq E[\m]$.
Hence $E[\m] = E[p]$.
Since the Hecke operators act as multiplication by integers on~$E$,
the image of $\T$ acting on~$E$ factors through~$\Z$, and so
the image of $\T$ acting on~$E[p]$ factors through~$\Z/ p \Z$.
Hence $\T/\m$ injects into~$\Z/p\Z$. Moreover, the Hecke operator~$\T_1$
acts as the identity, so $\m \neq \T$. Hence $\T/\m \isom \Z / p \Z$,
and so $\m$ is a maximal ideal of~$\T$.

By hypothesis, 
$E[p]$ is reducible over~$\Z/ p\Z$. Hence by the discussion above,
$E[\m]$ is reducible over~$\T /\m$. 
Then by Corollary~\ref{cor:main}, $\Sha(E)[\m] = 0$.
Let $x \in \Sha(E)[p]$.  If $t \in \m$, then 
since $\T/\m \isom \Z /p \Z$,
$t = pt'$ for some $t' \in \T$,
and so $t x = t' p x = 0$. 
Thus $\m x = 0$, and so $x \in \Sha(E)[\m]$, i.e., $x = 0$.
Thus $\Sha(E)[p] = 0$.
If the $p$-primary part of~$\Sha(E)$ were non-trivial, then
so would~$\Sha(E)[p]$. 
Hence the
$p$-primary part of~$\Sha(E)$ is trivial, as was to be shown.
}

Let $f$ denote the newform corresponding to~$E$ and
let $\sum_{n > 0} a_n(f) q^n$ be its Fourier expansion.
For all primes~$\ell \ndiv N$, 
$T_\ell$ acts as multiplication by~$a_\ell(f)$ on~$E$, 
and $U_p$ acts as multiplication by~$a_p(f)$ for all primes $p \divs N$.
Also, $T_1$ acts as the identity. Hence we get a surjection
$\phi: \T \ra \Z$, whose kernel is~$I_f$. 
The kernel of the composite $\T \stackrel{\phi}{\ra} \Z \ra \Z /p \Z$
is then $(p, I_f)$, the ideal of~$\T$ generated by~$p$ and~$I_f$.
Since the composite is also surjective, $\T / (p, I_f) \isom \Z/p\Z$
and so $(p, I_f)$ is a maximal
ideal of~$\T$, which we denote by~$\m$. 
Then $\m$ annihilates~$E[p]$, and so $E[p] \subseteq E[\m]$.
Conversely, since $p \in \m$, $E[\m] \subseteq E[p]$.
Hence $E[\m] = E[p]$.
By hypothesis, 
$E[p]$ is reducible over~$\Z/ p\Z$. Hence by the discussion above,
$E[\m]$ is reducible over~$\T /\m$. 
Then by Corollary~\ref{cor:main}, $\Sha(E)[\m] = 0$.
Now $\Sha(E)[p]$ is annihilated by~$p$ and~$I_f$, hence by~$\m$.
Thus $\Sha(E)[p] \subseteq \Sha(E)[\m]$, and so 
$\Sha(E)[p] = 0$.
If the $p$-primary part of~$\Sha(E)$ were non-trivial, then
so would~$\Sha(E)[p]$. 
Hence the
$p$-primary part of~$\Sha(E)$ is trivial, as was to be shown.
\end{proof}

In Section~\ref{sec:euler}, we mention the relevance of Theorem~\ref{thm:mainec}
from the point of view of
the second part of the Birch and Swinnerton-Dyer conjecture
and the results towards the conjecture coming from the theory of Euler systems.
In Section~\ref{sec:nonprime}, we discuss what one may expect
if the level is not prime. Finally, in Section~\ref{proof},
we give the proof of Theorem~\ref{thm:main}.
\vspace{0.1 in}

\noi {\em Acknowledgements:} We are grateful to E.~Aldrovandi and
F.~Calegari for answering some questions related to this article,
and to W.~Stein for helping us use the mathematical software sage.
Some of the computations were done on sage.math.washington.edu, which
is supported by National Science Foundation Grant No. DMS-0821725.

\section{Applications} \label{sec:euler}

We first recall the second part of the Birch and Swinnerton-Dyer conjecture
(see, e.g.,\cite[\S16]{silverman:aec} for details).
Let $L_E(s)$ denote the
{\em $L$-function} of $E$ and let 
$r$ denote the order of vanishing of $L_E(s)$ at $s=1$; it is
called the {\it analytic rank} of~$E$.
Let $c_p(A)$ denote the order of the arithmetic component group 
of the special fiber at the prime~$p$ of the N\'{e}ron 
model of~$E$ (so $c_p(E)=1$ for almost every prime). Let
$\Omega_E$ denote the volume of~$E({\R})$ calculated using
a generator
of the group of invariant differentials on the N\'{e}ron 
model of~$E$ and let $R_E$ denote the regulator of~$E$. 

The {\it second part of the Birch and Swinnerton-Dyer conjecture}
asserts the formula:
\begin{eqnarray} \label{bsd}
\frac{\lim_{s \ra 1} \{ (s -1 )^{-r} L_E(s) \}}{\Omega_E R_E} 
= \frac {\Mid \Sha_{E} \miD \cdot \prod_p c_p(E)}
{{\Mid E(\Q)_{\tor} \miD}^2}\ .
\end{eqnarray}

If $E$ is an elliptic curve and $N$ is prime, 
then by~\cite[Theorem~B]{emerton:optimal}, 
$c_N(E) = {\Mid E(\Q)_{\tor} \miD}$, so there is some cancellation
on the right side of~(\ref{bsd}). 
Our result shows that however, in this case, 
there is no cancellation between the odd parts of
$\Mid \Sha_{E} \miD$ and ${\Mid E(\Q)_{\tor} \miD}$; in particular,
up to a power of~$2$,
the numerator of the right side of~(\ref{bsd})
is precisely  $\Mid \Sha_{E} \miD$ 
and the denominator
is ${\Mid E(\Q)_{\tor} \miD}$.

\comment{
The significance of the project above is as follows: as mentioned
before, the 
theory of Euler systems can be used to show that
the actual order of the  Shafarevich-Tate group divides the order
predicted by the BSD conjecture, but staying away from certain primes~$p$,
which include the primes~$p$ 
for which the $p$-torsion subgroup
of the elliptic curve is reducible as a module over the absolute Galois group
of~$\Q$. Thus our project complements the theory 
of Euler systems and is a first step in trying to fill the gap 
in the results coming from the Euler system machinery at primes where
the Galois representation is reducible. 
}

We return to the case where $N$ is arbitrary (not necessarily prime).
Suppose $E$ is an elliptic curve whose analytic rank is zero or one.
Then by results of~\cite{kollog:finiteness},
$\Sha_E$ is finite, and 
moreover, one can use the theory of
Euler systems to bound~$\ord_p(|\Sha_E|)$ for a prime~$p$
from above in terms of the order conjectured by
formula~(\ref{bsd}) under certain hypotheses on~$p$.
As far as the author is aware, the hypotheses on the prime~$p$
include either the hypothesis that
the image of ${\rm Gal}(\Qbar/\Q)$ acting on~$E[p]$ is 
isomorphic to~${\rm GL}_2(\Z/p\Z)$  or the weaker hypothesis that 
the ${\rm Gal}(\Qbar/\Q)$-representation $E[p]$ 
is irreducible.
Our result shows that the latter hypothesis is redundant when $N$ is prime,
as we now explain in more detail.

Let $K$ be a quadratic imaginary field of discriminant not equal
to~$-3$ or~$-4$, and  such that
all primes dividing~$N$
split in~$K$. Choose an ideal~$\NN$ of the ring of integers~$\OO_K$
of~$K$ such that $\OO_K/\NN \isom \Z/N \Z$. Then the complex tori
$\C/\OO_K$ and~$\C/\NN^{-1}$ define elliptic curves related by 
a cyclic $N$-isogeny, and thus give a complex valued point~$x$ of~$X_0(N)$.
This point, called a Heegner point, is defined over the
Hilbert class field~$H$ of~$K$.
Let $P \in J(K)$ be the 
class of the divisor 
$\sum_{\sigma \in {\rm Gal}(H/K)} ((x) - (\infty))^\sigma$, where
$H$ is the Hilbert class field of~$K$. 

By~\cite{bump-friedberg-hoffstein}, we may choose K so 
that $L(E/K,s)$ vanishes to order one at $s=1$.
Hence, by~\cite[{\S}V.2:(2.1)]{gross-zagier}, $\pi(P)$ has 
infinite order, and by work of Kolyvagin, $E(K)$ has rank one and
the order of the Shafarevich-Tate group~$\Sha(E/K)$ of~$E$ over~$K$ 
is finite
(e.g., see~\cite[Thm.~A]{kolyvagin:euler} or~\cite[Thm.~1.3]{gross:kolyvagin}).
In particular, the index $[E(K):\Z\pi(P)]$
is finite.
By~\cite[{\S}V.2:(2.2)]{gross-zagier}
(or see~\cite[Conj.~1.2]{gross:kolyvagin}), the second part of
the Birch and Swinnerton-Dyer conjecture becomes:
\begin{conj}[Birch and Swinnerton-Dyer, Gross-Zagier] \label{conj:bsd}
\begin{eqnarray} \label{gzformula}
|E(K)/ \Z \pi(P)| 
= c_{\scriptscriptstyle E} \cdot \prod_{\ell | N} c_\ell(E) \cdot 
\sqrt{\Mid \Sha(E/K) \miD},
\end{eqnarray}
where $c_{\scriptscriptstyle E}$ is the Manin constant of~$E$.
\end{conj}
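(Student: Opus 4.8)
The statement is the Gross--Zagier reformulation of the second part of the Birch and Swinnerton-Dyer conjecture, formula~(\ref{bsd}), taken over~$K$ in place of~$\Q$. Since both~(\ref{bsd}) and~(\ref{gzformula}) are conjectural, the content to establish is really their \emph{equivalence}: granting the rank-one BSD formula for~$E$ over~$K$ one should recover~(\ref{gzformula}), and conversely. The plan is to write down the rank-one BSD formula over~$K$ and transport it across the Gross--Zagier height formula, watching the transcendental factors cancel.

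First I would record the rank-one case of~(\ref{bsd}) over~$K$: since $L(E/K,s)$ vanishes to order one (by the choice of~$K$ via~\cite{bump-friedberg-hoffstein}) and $E(K)$ has rank one (Kolyvagin), the regulator equals $\hat h(Q)$ for a generator~$Q$ of $E(K)/E(K)_{\tor}$, where $\hat h$ is the N\'eron--Tate height. Writing $\pi(P)=nQ+t$ with $t\in E(K)_{\tor}$, one has $\hat h(\pi(P))=n^2\hat h(Q)$, and from the splitting $E(K)\isom\Z\oplus E(K)_{\tor}$ one computes $|E(K)/\Z\pi(P)|=|n|\cdot|E(K)_{\tor}|$. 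Next I would invoke the Gross--Zagier formula~\cite[{\S}V.2]{gross-zagier}, which expresses $L'(E/K,1)$ as an explicit archimedean period times~$\hat h(\pi(P))$. Substituting this into the rank-one BSD formula, the factor $\hat h(\pi(P))=n^2\hat h(Q)$ cancels the regulator~$\hat h(Q)$, and one is left with $n^2\,|E(K)_{\tor}|^2=|E(K)/\Z\pi(P)|^2$ expressed as $|\Sha(E/K)|$ times the product of the local factors~$c_\ell(E)$ times a ratio of periods. Taking square roots then yields~(\ref{gzformula}), provided the accumulated period ratio is exactly $\ce^2\prod_{\ell|N}c_\ell(E)$; this identification is the crux.

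The main obstacle is precisely the bookkeeping of the rational and archimedean normalization constants hidden in that period ratio, which is where the single power of $\prod_{\ell|N}c_\ell(E)$ and the power $\sqrt{|\Sha(E/K)|}$ are actually produced. Four inputs must be matched: the Manin constant~$\ce$, relating the N\'eron differential of~$E$ to the pullback of the normalized newform; the comparison of the period occurring in Gross--Zagier with the real period~$\Omega_E$ of~(\ref{bsd}), including the factor~$\sqrt{|D|}$ and the unit index $u=|\OO_K^\times|/2$ (the hypothesis $D\neq-3,-4$ forces $u=1$, removing an otherwise intrusive factor); the passage from the Tamagawa numbers over~$\Q$ to those over~$K$ at the primes above~$N$, which is exactly where the hypothesis that the primes dividing~$N$ split in~$K$ enters; and the behaviour of the torsion subgroup under base change to~$K$. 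Once these are tracked, the height--regulator cancellation is formal, so the delicate point is verifying that the fudge factors conspire to leave precisely $\ce\cdot\prod_{\ell|N}c_\ell(E)\cdot\sqrt{|\Sha(E/K)|}$.
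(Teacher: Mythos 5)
This statement is a conjecture, and the paper supplies no proof of it: it is introduced with the single sentence that ``by~\cite[{\S}V.2:(2.2)]{gross-zagier} (or see~\cite[Conj.~1.2]{gross:kolyvagin}), the second part of the Birch and Swinnerton-Dyer conjecture becomes'' the displayed formula, i.e.\ the entire content is a citation to Gross--Zagier for the reformulation. You correctly recognize that the only thing one could actually prove here is the \emph{equivalence} of~(\ref{gzformula}) with the rank-one case of~(\ref{bsd}) over~$K$, and your sketch --- cancel the regulator against $\hat h(\pi(P))=n^2\hat h(Q)$ via the Gross--Zagier height formula, convert $n^2|E(K)_{\tor}|^2$ into $|E(K)/\Z\pi(P)|^2$, and track the Manin constant, the period comparison (with the $\sqrt{|D|}$ and unit-index factors, the latter trivial since $D\neq-3,-4$), the split local factors, and the torsion under base change --- is exactly the derivation carried out in the cited sources. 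You honestly flag that the period bookkeeping is the crux and is not verified in your write-up; since the paper itself defers that entire computation to~\cite{gross-zagier}, this is not a gap relative to the paper, but be aware that neither you nor the paper has proved anything: the displayed identity remains conjectural, resting on the unproved BSD formula.
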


Note that the Manin constant~$c_{\scriptscriptstyle E}$
is conjectured to be one, and
one knows that if $p$ is a prime such that $p^2 \nmid 2 N$, 
then $p$ does not divide~$c_{\scriptscriptstyle E}$
(by~\cite[Cor.~4.1]{mazur:rational} 
and~\cite[Thm.~A]{abbes-ullmo}).

The following is~\cite[Cor.~1.5]{jetchev}:
\begin{prop}[Jetchev] \label{prop:jetchev}
Suppose that 
$p$ is a prime such that $p \ndiv 2 N$,
the image ${\rm Gal}(\Qbar/\Q)$ acting on~$E[p]$ is isomorphic to~${\rm GL}_2(\Z/p\Z)$ ,
and $p$ divides at most
one~$c_\ell(E)$. 
Then 
$$\ord_p (\Mid \Sha(E/K) \miD) \leq \ord_p (\Mid \Sha(E/K) \miD_{\rm an}).$$
\end{prop}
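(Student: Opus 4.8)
The plan is to derive the bound from Kolyvagin's Euler system of Heegner points, refined by a local analysis at the primes dividing~$N$ that absorbs the Tamagawa factors. First I would rewrite the target inequality in a form suited to the Euler system method. By the Gross--Zagier formula~(\ref{gzformula}) defining $\Mid \Sha(E/K) \miD_{\rm an}$,
$$\sqrt{\Mid \Sha(E/K) \miD_{\rm an}} = \frac{[E(K):\Z \pi(P)]}{\ce \cdot \prod_{\ell | N} c_\ell(E)},$$
and since $p \ndiv 2N$ we have $p \ndiv \ce$, so $\ord_p(\ce) = 0$. Hence the assertion is equivalent to
$$\ord_p(\Mid \Sha(E/K) \miD) \leq 2\, \ord_p\big([E(K):\Z \pi(P)]\big) - 2\sum_{\ell | N} \ord_p\big(c_\ell(E)\big).$$

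Next I would set up the Euler system. For squarefree products~$n$ of Kolyvagin primes~$\ell$ (inert in~$K$, with $p \divs \ell + 1$ and $p \divs a_\ell(E)$), the Heegner points $y_n \in E(H_n)$ over the ring class fields~$H_n$ satisfy the norm-compatibility relations, and Kolyvagin's derivative operators produce cohomology classes $c(n) \in H^1(K, E[p^M])$ that are unramified away from~$n$ and have prescribed behaviour at the primes dividing~$n$. The hypothesis that ${\rm Gal}(\Qbar/\Q)$ surjects onto~${\rm GL}_2(\Z/p\Z)$ supplies the irreducibility and non-degeneracy inputs (in particular the vanishing $H^1(K(E[p^M])/K, E[p^M]) = 0$) needed to run the argument. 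Global duality (Poitou--Tate, together with the local Tate pairing) then bounds the Selmer group ${\rm Sel}_{p^\infty}(E/K)$, and hence $\Sha(E/K)[p^\infty]$, in terms of the $p$-divisibility of~$\pi(P)$ and local correction terms supported at the ramified primes.

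The heart of the matter, and the step I expect to be the main obstacle, is the local analysis at the bad primes $\ell \divs N$. Here I would study the localization $E(K) \ra E(K_\ell)$ of the Heegner point~$\pi(P)$ and its image in the group of connected components~$\Phi_\ell$ of the special fibre of the N\'eron model (whose arithmetic component group has order~$c_\ell(E)$). The goal is a \emph{global divisibility} statement of the form: a factor of $p^{\ord_p(c_\ell(E))}$ in the index $[E(K):\Z \pi(P)]$ is forced by the local structure at~$\ell$ and is invisible to $\Sha$, so that Kolyvagin's raw bound $\ord_p(\Mid \Sha(E/K) \miD) \leq 2\, \ord_p[E(K):\Z \pi(P)]$ may be sharpened by subtracting $2\,\ord_p(c_\ell(E))$ for each bad prime. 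Establishing this \emph{global divisibility} of the Heegner point, by controlling how its reduction at~$\ell$ interacts with the ramified part of the Kolyvagin classes and with the local Tate pairing, is the delicate technical core.

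Finally, the hypothesis that $p$ divides at most one of the $c_\ell(E)$ is what lets the contributions from distinct bad primes decouple: with only one relevant component group, one avoids cross-terms and can feed the single local correction back into the duality bound to obtain the sharp inequality $\ord_p(\Mid \Sha(E/K) \miD) \leq \ord_p(\Mid \Sha(E/K) \miD_{\rm an})$. Without the single-Tamagawa hypothesis, interactions between several bad primes would obstruct the clean cancellation, and one would be left only with Kolyvagin's coarser bound.
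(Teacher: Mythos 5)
The paper does not prove this proposition at all: it is quoted verbatim as Corollary~1.5 of Jetchev's paper \emph{Global divisibility of Heegner points and Tamagawa numbers}, so there is no in-paper argument to compare against. Your sketch correctly identifies the strategy of Jetchev's actual proof --- reformulate the inequality via the Gross--Zagier expression for $\Mid \Sha(E/K)\miD_{\rm an}$, run Kolyvagin's Euler system of derived Heegner classes under the big-image hypothesis, and then sharpen Kolyvagin's raw bound $\ord_p(\Mid\Sha(E/K)\miD) \le 2\,\ord_p([E(K):\Z\pi(P)])$ by a local analysis at the bad primes showing that the Tamagawa factors force extra $p$-divisibility of the Heegner point --- and your explanation of the role of the ``$p$ divides at most one $c_\ell(E)$'' hypothesis is essentially right (Jetchev's method gains only $\max_\ell \ord_p(c_\ell(E))$, which equals $\sum_\ell \ord_p(c_\ell(E))$ precisely under that hypothesis). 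However, what you have written is a roadmap rather than a proof: the step you yourself flag as ``the delicate technical core,'' namely the global divisibility of $\pi(P)$ by the component-group orders and its compatibility with the duality bound, is the entire content of Jetchev's theorem and is not established by your outline.
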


We also have:

\begin{prop}[Cha] \label{prop:cha}
Suppose that $p$ is an odd prime such that $p$ does not divide
the discriminant of~$K$, $p^2 \nmid N$, and 
the ${\rm Gal}(\Qbar/\Q)$-representation $E[p]$ 
is irreducible. Then
$$\ord_p (\Mid \Sha(E/K) \miD) \leq 
\ord_p(\Mid \Sha(E/K) \miD_{\rm an}) + 
\ord_p\bigg(\frac{\prod_{\ell | N} c_\ell(E)}
{\Mid E(K)_{\rm tor} \miD}\bigg).$$
\end{prop}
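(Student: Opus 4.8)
The plan is to deduce this from Kolyvagin's Euler-system argument using the Heegner points, bounding the $p$-part of~$\Sha(E/K)$ in terms of the divisibility of~$\pi(P)$ by powers of~$p$, and then converting that divisibility into the analytic order $\Mid\Sha(E/K)\miD_{\rm an}$, which by the Gross--Zagier formula is expressed through the Heegner index exactly as in Conjecture~\ref{conj:bsd}. The essential difference from Proposition~\ref{prop:jetchev}, where the image of~${\rm Gal}(\Qbar/\Q)$ acting on~$E[p]$ is assumed to be all of~${\rm GL}_2(\Z/p\Z)$, is that here one must run the cohomological part of the argument knowing only that $E[p]$ is irreducible; it is precisely the failure of certain cohomology groups to vanish in this weaker setting that forces the correction term $\ord_p(\prod_{\ell\mid N} c_\ell(E)/\Mid E(K)_{\rm tor}\miD)$.

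First I would set up the system of Heegner points $y_n \in E(K_n)$ over the ring class fields~$K_n$ of~$K$ of conductor~$n$, where $n$ ranges over squarefree products of Kolyvagin primes---rational primes $\ell\ndiv Np$ that are inert in~$K$ and satisfy $a_\ell(E)\equiv \ell+1\equiv 0 \pmod{p^M}$ for a large auxiliary exponent~$M$. Applying Kolyvagin's derivative operators $D_n=\prod_{\ell\mid n}D_\ell$ and reducing modulo~$p^M$ produces a family of cohomology classes $c(n)\in H^1(K,E[p^M])$ whose ramification is concentrated at the primes dividing~$n$ and whose singular part at~$\ell$ is pinned down by the finite part of~$c(n/\ell)$. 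Feeding these classes into global reciprocity---the vanishing of the sum of local invariants---bounds the Selmer group $\mathrm{Sel}_{p^M}(E/K)$, and hence $\Sha(E/K)[p^\infty]$, in terms of the $p$-adic valuation of the index of~$\pi(P)$ in the free part of~$E(K)$.

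The hard part---and the substance of Cha's contribution---is that when $E[p]$ is merely irreducible, the vanishing statements that make this argument clean in the ${\rm GL}_2$ case can fail: the group $H^1({\rm Gal}(\Q(E[p^M])/\Q),E[p^M])$ need not be trivial, and the Chebotarev density argument producing Kolyvagin primes with the prescribed Frobenius must be reexamined to ensure that enough such primes exist. I would therefore bound the sizes of these cohomology groups together with the local contributions at the primes $\ell\mid N$---where the image of the local points in~$H^1$ sees the Tamagawa numbers $c_\ell(E)$---so that their combined effect does not exceed $\ord_p(\prod_{\ell\mid N}c_\ell(E)/\Mid E(K)_{\rm tor}\miD)$. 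The hypotheses that $p$ is odd, that $p\nmid\mathrm{disc}(K)$, and that $p^2\nmid N$ are exactly what is needed to keep the ramification at~$p$ unobstructed, to force $p\nmid c_{\scriptscriptstyle E}$, and to make the local analysis at the primes dividing~$N$ tractable.

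Finally I would assemble the inequality. The Gross--Zagier formula identifies the canonical height of~$\pi(P)$, and hence the index of~$\pi(P)$ in~$E(K)$, with the ratio of $L'(E/K,1)$ to the real period and the regulator; combined with Conjecture~\ref{conj:bsd}, this expresses $\Mid\Sha(E/K)\miD_{\rm an}$ through that index, the Manin constant~$c_{\scriptscriptstyle E}$, and the Tamagawa numbers~$c_\ell(E)$. Substituting the Euler-system bound on $\ord_p\Mid\Sha(E/K)\miD$ and comparing with this expression, the discrepancy between the raw Heegner index and the analytically predicted order is accounted for precisely by the Tamagawa and torsion factors, which yields the correction $\ord_p(\prod_{\ell\mid N}c_\ell(E)/\Mid E(K)_{\rm tor}\miD)$. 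In the ${\rm GL}_2$ setting of Proposition~\ref{prop:jetchev} this discrepancy can instead be removed by a sharper divisibility statement for~$\pi(P)$ that exploits the Tamagawa numbers, so that no correction term remains; under irreducibility alone it survives as asserted.
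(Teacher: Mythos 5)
The paper's proof of this proposition is a deduction from the literature: it invokes Theorem~21 of Cha's paper, rewrites it via the formula in Conjecture~4 of loc.\ cit., and supplies the one genuinely new ingredient, namely that under the hypotheses ($p$ odd, $p^2 \nmid N$) the Manin constant is prime to~$p$ by Mazur's theorem, so that it can be suppressed in the resulting inequality. Your proposal instead sets out to reprove Cha's theorem from scratch by running Kolyvagin's Euler-system argument under the weaker irreducibility hypothesis. That is the right machinery, and your description of its architecture --- Heegner points over ring class fields, derivative classes $c(n)$ ramified only at Kolyvagin primes, global reciprocity bounding the Selmer group by the divisibility of~$\pi(P)$, and Gross--Zagier converting the Heegner index into $\Mid \Sha(E/K)\miD_{\rm an}$ --- does match how Cha's argument is organized.

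As a proof, however, the proposal has a genuine gap located exactly where the content of the theorem lives. The step in which you ``bound the sizes of these cohomology groups together with the local contributions at the primes $\ell \mid N$ \ldots\ so that their combined effect does not exceed $\ord_p\big(\prod_{\ell\mid N} c_\ell(E)/\Mid E(K)_{\rm tor}\miD\big)$'' is precisely the inequality to be proved; no argument is given for why the cohomological defect is controlled by the Tamagawa numbers rather than by some other invariant, nor for why $\Mid E(K)_{\rm tor}\miD$ enters in the denominator (in Cha's treatment this arises from comparing $E(K)/p^M E(K)$ with its image in $H^1(K, E[p^M])$, whose kernel is governed by the torsion). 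Until those quantitative bounds are actually established, what you have is an outline of Cha's strategy, not a proof of the stated inequality. Since the proposition is attributed to Cha, the economical route --- and the one the paper takes --- is to cite his Theorem~21 and verify only the translation into the form above, including the observation that $p$ does not divide the Manin constant; if you intend to reprove the Euler-system input itself, the two bounds you assert in the third paragraph must be carried out in full.
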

\begin{proof}
This follows from Theorem~21 of~\cite{cha}, in view of
the formula in Conjecture~4 of loc. cit. and the fact that
under the hypotheses, the Manin constant of~$E$
is one by~\cite[Cor.~4.1]{mazur:rational}. 
\end{proof}

The two results above are typical results coming from the theory
of Eulers systems.
The inflation-restriction sequence shows that 
the natural map $\Sha(E/\Q) \ra \Sha(E/K)$ has kernel 
a finite group of order a power of~$2$, hence
the above results give bounds on~$\Mid \Sha(E/\Q) \miD$
as well.
Note that  the hypotheses of Proposition~\ref{prop:jetchev}
implies that 
the ${\rm Gal}(\Qbar/\Q)$-representation $E[p]$ 
is irreducible.
Our result shows that if $N$ is prime, then 
the bound on~$\Mid \Sha(E/\Q) \miD$ obtained from
Proposition~\ref{prop:jetchev}  holds even
if the ${\rm Gal}(\Qbar/\Q)$-representation $E[p]$ 
is reducible and  
the bound on~$\Mid \Sha(E/\Q) \miD$ obtained from
Proposition~\ref{prop:cha} holds even without
the hypothesis 
that the ${\rm Gal}(\Qbar/\Q)$-representation $E[p]$ 
is irreducible. 

Finally, we remark that Theorem~\ref{thm:main}, which applies
not just to elliptic curves, but more generally to
abelian subvarieties of~$J_0(N)$ that are
stable under the action of the Hecke algebra, may have
an application to a potential equivariant Tamagawa number
conjecture where the quantities in the Birch and Swinnerton-Dyer
conjectural formula (analogous to~(\ref{bsd}) for 
such abelian varieties) are treated as modules over
the Hecke algebra. Also, this more general theorem
motivates the need to come up with 
Euler systems type results for $\m$-primary parts of the Shafarevich-Tate
group for maximal ideals~$\m$ of the Hecke algebra
(as opposed to $p$-primary parts for some prime~$p$).

\section{Non-prime conductors} \label{sec:nonprime}
One may wonder what happens if the hypothesis that $N$
is prime is dropped in any of our results above.
We searched through Cremona's database~\cite{cremona:onlinetables}
of elliptic curves
of conductor up to~130,000 using the mathematical software sage
to find all curves~$E$ such that
there
is an odd prime~$p$ such that
$E[p]$ is
reducible, and $p$ divides the Birch and Swinnerton-Dyer
 conjectural order of~$\Sha(E/\Q)$.
We found several examples, some of which are listed in Table~\ref{table1}.
In the table, the first column gives Cremona's label for the elliptic
curve, the second column is the prime factorization of the
conductor, the third column is
the  rank of~$E(\Q)$, the fourth column lists 
all {\em odd} primes~$p$ such that $E[p]$ is
reducible, the fifth column is the order of the odd part
of~$E(\Q)_{\rm tor}$,
the sixth column is the odd part of~$\prod_p c_p(E)$, and the last column
is the odd part of the Birch and Swinnerton-Dyer conjectural order of~$\Sha(E/\Q)$.

\begin{table}\caption{\label{table1}}
\begin{center}
\begin{tabular}{|l|l|l|l|l|l|l|}\hline
E & N & r & $p : E[p]$ is& $|E(\Q)_{\rm tor}|$ & $\prod_p c_p(E)$ & $|\Sha_{\rm an}|$\\
   &  &     & reducible    &    &   &  \\\hline
2366d1 & $2\cdot 7\cdot 13^2$ & 0 & 3 &  3 & 3 & 9 \\\hline
5054c1  & $2\cdot 7\cdot 19^2$  & 0  &  3  & 1  & 1  & 9\\\hline
46683w1  & $3^3\cdot 7\cdot 13\cdot 19$  & 1  & 3  & 3  & 1  & 9\\\hline
10621c1  & $13\cdot 19\cdot 43$  & 0  & 3  &  3  &  1  &  9\\\hline
33825o1  & $3 \cdot 5^2 \cdot 11 \cdot 41$  & 0  & 5  &  1  & 1  & 25 \\\hline
40455k1  & $3^2\cdot 5\cdot 29\cdot 31$  & 0  & 5  & 1  & 1  & 25\\\hline
52094m1  & $2\cdot 7\cdot 61^2$ &  0  & 5 &  1 &  1  & 25\\\hline
\end{tabular}
\end{center}
\end{table}

Thus if one believes
the second part of the Birch and Swinnerton-Dyer conjecture, then
if the hypothesis that N is prime
is dropped from the statement of  Theorem~\ref{thm:mainec}
(or of Theorem~\ref{thm:main}
and Corollary~\ref{cor:main}),
then the statement is no longer true. 
Among all the counterexamples that we found (up to level~130,000)
the curve 2366d1 has the smallest conductor, all except the curve 46683w1
have analytic rank~$0$, the curve 10621c1 has the smallest square-free conductor,
and the curves 33825o1,  40455k1, and 52094m1 are the only ones
for which $p \neq 3$. 
One might wonder if  the weaker statement
of Theorem~\ref{thm:mainec} that
if an odd prime~$p$ divides~$|E(\Q)_{\rm tor}|$, then 
$p$ does not divide~$|\Sha(E/\Q)|$ holds if $N$
is not prime.
If one assumes the second part of the Birch and Swinnerton-Dyer
conjecture, then  the curve
10621c1 (among others) shows that
this need not hold for $p=3$ 
(note also that for this curve,
level is square-free and not divisible by~$p$); however the weaker version
does hold in the examples for any prime $p>3$. The primes~$2$ and~$3$
are rather special from the point of view of component groups
(e.g., see Remark 2 on p.~175 of~\cite{mazur:eis}).
This raises the following question: 
\begin{que}
Is it true that 
if a prime~$p > 3$ divides~$|E(\Q)_{\rm tor}|$, then 
$p$ does not divide~$|\Sha(E/\Q)|$, perhaps under the 
restriction that $N$ is square-free?
\end{que}
While the data mentioned above does support an affirmative answer,
in the range of Cremona's database, if $p$ is a prime bigger than~$3$,
then the order of the torsion subgroup is often not divisible by~$p$
and the Birch and Swinnerton-Dyer
conjectural order of the Shafarevich-Tate group is rarely divisible by~$p$;
thus the chance of finding an example where both
are divisible by~$p$ is very slim. Hence the data is not enough
to make the conjecture that the answer to the above question is yes.

\comment{
Since $3$ is a special prime for cpt groups (which show up in our proof),
and we found several counteregs (about ...) for $p=3$ (including ...
at square-free level), but very few for $p > 3$ (and there are ...
examples of Sha div by a prime $> 3$, we suspect:
If N is sqfree and $p > 3$, then Thm holds.

Rather: Tor is often not div by 5 and Sha is rarely div by 5, so the
chance of finding an example where 5 divides both (if that is possible)
is very slim. So pose a que:

If can do egs for avs, may get a clearer picture at lower level, since
more examples where Sha div by 5 (e.g. 389)

If N is not prime, :
\begin{table}\caption{\label{table}}
\begin{center}
\begin{tabular}{|l|l|l|l|l|l|l|}\hline
E & N & r & $p : E[p]$ is& $|E(\Q)_{\rm tor}|$ & $\prod_p c_p(E)$ & $|\Sha_{\rm an}|$\\
   &  &     & reducible    &    &   &  \\\hline
2366d1 & $2\cdot 7\cdot 13^2$ & 0 & 3 &  3 & 3 & 9 \\\hline
5054c1  & $2\cdot 7\cdot 19^2$  & 0  &  3  & 1  & 1  & 9\\\hline
46683w1  & $3^3\cdot 7\cdot 13\cdot 19$  & 1  & 3  & 3  & 1  & 9\\\hline
10621c1  & $13\cdot 19\cdot 43$  & 0  & 3  &  3  &  1  &  9\\\hline
33825o1  & $3 \cdot 5^2 \cdot 11 \cdot 41$  & 0  & 5  &  1  & 1  & 25 \\\hline
40455k1  & $3^2\cdot 5\cdot 29\cdot 31$  & 0  & 5  & 1  & 1  & 25\\\hline
52094m1  & $2\cdot 7\cdot 61^2$ &  0  & 5 &  1 &  1  & 25\\\hline
\end{tabular}
\end{center}
\end{table}

curve conductor rank redprs Tam(odd)  Sha
first (rank 0): 2366d1 2*7*13^2 0 [3] 3 3 9
first with 3 red no 3 tor 5054c1 2*7*19^2 0 [2, 3] 1 1 9
only example of rank 1: 46683w1 3^3*7*13*19 1 [3] 3 1 9
first sqfree: 10621c1 13*19*43 0 [3] 3 1 9
all 5 red: 33825o1 3*5^2*11*41 0 [5] 1 1 25 
           40455k1 3^2*5*29*31 0 [5] 1 1 25
           52094m1 2*7*61^2 0 [5] 1 1 25
           note that 5 does not divide tor and level not sqfree
}

\section{Proof of Theorem~\ref{thm:main}}
\label{proof}

As mentioned earlier, our proof follows that 
of~\cite[III.3.6]{mazur:eis} (which proves 
the theorem above for the case where $A = J_0(N)$), 
with an extra input coming 
from~\cite{emerton:optimal}.
We also take the opportunity to give some details that
were skipped in~\cite[III.3.6]{mazur:eis}.

Let $\mm$ be a maximal ideal of~$\T$ 
containing~$\Eis$ and having odd residue characteristic~$p$.
Following~\cite[\S16]{mazur:eis}, an odd prime number $\ell \neq N$ 
is said to be {\em good} if $\ell$ is not a $p$-th power
modulo~$N$, and $\frac{\ell - 1}{2} \not\equiv 0 \bmod p$.
As mentioned in~\cite[p.125]{mazur:eis}, there are always
some good primes. 
Let $\eta = 1+ \ell - T_\ell$
for a good prime~$\ell$. 
By~\cite[II.16.6]{mazur:eis}, $\Eis \T_\mm$ is principal and  
generated by~$\eta$. 
We will show below that the map induced 
on~$\Sha(A)$ by the map~$\eta$ on~$A$ is injective,
i.e., $\Sha(A)[\eta]=0$.
Then since $\Eis \T_\mm \subseteq \mm \T_\mm$, 
we see that $\eta \in \mm \T_\mm$, and so $\Sha(A)[\mm] 
= \Sha(A)_\mm[\mm \T_\mm] = 0$, which
proves the theorem. 

We now turn to showing that
the map induced 
on~$\Sha(A)$ by the map~$\eta$ on~$A$ is injective.
Let $S = {\rm Spec} \Z$.
Let $\JJ$ denote the N\'eron model of~$J_0(N)$ over~$S$,
and $\Aa$ the N\'eron model of~$A$ over~$S$. 
We denote the map induced by~$\eta$ on~$\JJ$ by~$\nJ$  
on~$\Aa$ by~$\eta$ again. 
Let $\Delta$ denote a finite set of primes of~$\T$, not containing~$\mm$,
but containing all other primes in the support of the $\T$-modules
$(\ker \eta)(\Qbar)$ and $(\coker \eta)(\Qbar)$. We shall work in
the category of $\T$-modules modulo the category of $\T$-modules
whose supports lie in~$\Delta$. Thus, all equalities, injections, 
surjections, exact sequences,
etc., below shall mean ``modulo'' $\Delta$.
If $\GG$ is a group scheme over~$S$, then we shall denote 
the associated $fppf$~sheaf on~$S$ also by~$\GG$. All cohomology
groups below are for the $fppf$~topology over~$S$.

Now $\Sha(A)$ is a submodule of~$H^1(S,\Aa)$
by the Appendix of~\cite{mazur:tower}. We will show below
that $\eta$ induces an injection on~$H^1(S,\Aa)$. 
Then the map on~$\Sha(A)$ induced by~$\eta$ is also injective,
as was to be shown. It remains to prove the claim
that $\eta$ induces an injection on~$H^1(S,\Aa)$, which is what
we do next. 

Let $C$ denote the finite flat subgroup scheme of~$\JJ$ generated
by the subgroup of~$J_0(N)(\Q)$ generated by the divisor~$(0)-(\infty)$;
it is called the cuspidal subgroup in~\cite[\S~II.11]{mazur:eis}.
Let $\Sigma$ denote the 
the Shimura
subgroup of~$\JJ$ as in~\cite[\S~II.11]{mazur:eis}; 
by~\cite[II.11.6]{mazur:eis}, it is 
a $\mu$-type group (i.e., 
a finite flat group scheme whose Cartier dual is a 
constant group).
Let $C_p$ denote the $p$-primary component of the cuspidal
subgroup~$C$ and 
$\Sigma_p$ the $p$-primary component of 
the Shimura subgroup~$\Sigma$.
As shown in the proof of Lemma~16.10 in Chapter~II of~\cite{mazur:eis},
$\nJ$ is an isogeny.
By~\cite[II.16.6]{mazur:eis}, 
$\ker \nJ = C_p \oplus \Sigma_p$ 
as group schemes, and hence as $fppf$~sheaves.
By Theorem~4(i) in~\cite[\S7.5]{neronmodels}, the map
$\Aa \ra \JJ$ is a closed immersion. 
Thus $\ker \eta = (\Aa \cap C_p) \oplus (\Aa \cap \Sigma_p)$. 
Let $\CA$ denote the group $H^0(S, \Aa \cap C_p) = A \cap C_p$.
Then, since $\Sigma$ is a $\mu$-type subgroup, we have
\begin{eqnarray} \label{eqn:h0}
H^0(S,\ker \eta) = \CA.
\end{eqnarray}
Also, since $C_p$ is a constant group scheme
and $\Sigma_p$ is a $\mu$-type group scheme,
$C_p \cap \Aa$ and $\Sigma_p \cap \Aa$ are admissible group schemes
in the sense
of \S~I.1(f) of~\cite{mazur:eis}, and
so by~\cite[I.1.7]{mazur:eis}, $H^1(S,C_p \cap \Aa) = 0$
and $H^1(S,\Sigma_p \cap \Aa) = 0$
(in the notation of~\cite[I.1.7]{mazur:eis}: for $G = C_p \cap \Aa$,
$h^0(G) = \alpha(G)$, and $\delta(G)=0$ since $G$ is finite, so $h^1(G) = 0$;
for $G = \Sigma_p \cap \Aa$, $h^0(G) = 0$, $\alpha(G) = 0$, and
$\delta(G) = 0$ since $G$ is finite by~\cite[II.11.6]{mazur:eis},
and so $h^1(G) = 0$).
Thus
\begin{eqnarray} \label{eqn:h1}
H^1(S,\ker \eta) = 0.
\end{eqnarray}

Consider the short exact sequence:
\begin{eqnarray} \label{eqn:ses1}
0 \ra \ker \eta \ra \Aa \stackrel{\eta}{\ra} \im \eta \ra 0.
\end{eqnarray}
Its associated long exact sequence is:
$$0 \ra H^0(S,\ker \eta) \ra H^0(S,\Aa) \stackrel{\eta}{\ra} H^0(S,\im \eta)
 \ra H^1(S,\ker \eta) \ra  \ldots.$$
Now $H^1(S,\ker \eta) = 0$ by~(\ref{eqn:h1}),
so we have an exact sequence:
\begin{eqnarray} \label{eqn:ses2}
0 \ra H^0(S,\ker \eta) \ra H^0(S,\Aa) \stackrel{\eta}{\ra} H^0(S,\im \eta) \ra 0.
\end{eqnarray}
Now consider the short exact sequence:
$$0 \ra \im \eta \ra \Aa \ra \coker \eta \ra 0.$$
Its associated long exact sequence is:
\begin{eqnarray}\label{eqn:les2}
0 &\ra & H^0(S,\im \eta) \ra H^0(S,\Aa) \stackrel{i}{\ra} H^0(S,\coker \eta)
 \ra \nonumber \\
& \ra & H^1(S,\im \eta) \stackrel{i'}{\ra} H^1(S,\Aa) \ra \ldots,
\end{eqnarray}
where $i$ and~$i'$ denote the indicated induced maps.
Combining~(\ref{eqn:ses2}) and~(\ref{eqn:les2}), we get the exact sequence
\begin{eqnarray} \label{eqn:combles}
0 & \ra & H^0(S,\ker \eta) \ra H^0(S,\Aa) \stackrel{\eta}{\ra} 
H^0(S,\Aa) \stackrel{i}{\ra} H^0(S,\coker \eta) \ra \nonumber \\
 & \ra  &H^1(S,\im \eta) \stackrel{i'}{\ra} H^1(S,\Aa) \ra \ldots.
\end{eqnarray}

\noi {\em Claim:} The map~$i$ is surjective.
\begin{proof}
Let $\Aa^0$ denote the identity component of~$\Aa$ and let
$\Phi$ denote the quotient (i.e., the component group):
\begin{eqnarray} \label{eqn:conn}
0 \ra \Aa^0 \ra \Aa \ra \Phi \ra 0.
\end{eqnarray}
By~\cite[Thm.~4.12(ii) and~(iii)]{emerton:optimal}, $\Phi$ is a constant 
group scheme and the specialization map $\CA \ra \Phi$ is an isomorphism.
Hence $\Phi$ is killed by~$\eta$. Also, 
by the argument in~\cite[\S~2.2]{grothendieck:monodromie}
(cf. Prop.~C.8 and Cor.~C.9 of~\cite{milne:duality}), $\eta$ is surjective
on~$\Aa^0$ (considered as an $fppf$~sheaf). 
In view of this, 
if we apply the map~$\eta$ to the
short exact sequence~(\ref{eqn:conn})  of $fppf$~sheaves
and consider the snake lemma, then we see
that $\coker \eta = \Phi$.
Now $H^0(S,\Aa) = A(\Q)$ is a finitely generated abelian group, and 
by the exactness of~(\ref{eqn:combles}), the kernel of
the map induced by~$\eta$ on this group
contains the finite group~$H^0(S,\ker \eta) = \CA$. 
Hence the cokernel of the map induced by~$\eta$ on~$H^0(S,\Aa)$
has order at least that of~$\CA$. 
But by the exactness of~(\ref{eqn:combles}), 
this cokernel is isomorphic
to the image of~$i$, which is contained in~$H^0(S,\coker \eta) = H^0(S,\Phi) 
= \CA$ (from the discussion above), and hence has order
at most that of~$\CA$.
Thus $i$ must be surjective, as was to be shown.
\end{proof}

\comment{
How use fact that $H^0(S,\ker \eta) = C_p$ and
$H^0(S,\coker \eta) = C_p$. Then since $C_p$ is a direct 
factor of~$H^0(S,\Aa)$, $H^0(S,\Aa) = C_p \oplus G$. 
Then $i(H^0(S,\Aa)) \isom H^0(S,\Aa)/\im \eta \isom (C_p \oplus G)/(0 \oplus G)
\isom C_p = H^0(S,\Phi)$. 
But considering that multiplication by~$\eta$ is surjective on~$H^0(S,\Aa^0)$
and trivial on~$\Phi$ (modulo~$\Delta$), the snake lemma shows 
that $H^0(S,\coker \eta)$ is a submodule of~$H^0(S,\Phi)$.
Thus $i$ is surjective. 
}

Using the claim above,
from the exactness of~(\ref{eqn:combles}), we see that
the map~$i'$ is injective.
Part of long exact sequence associated to~(\ref{eqn:ses1}) is
$$\ldots \ra H^1(S,\ker \eta) \ra H^1(S,\Aa) \stackrel{\eta}{\ra} H^1(S,\im \eta) \ra \ldots.$$
From this, using the fact that the natural map
$i': H^1(S,\im \eta) \ra H^1(S,\Aa)$  in~(\ref{eqn:combles})
is injective, 
we get the following exact sequence:
\begin{eqnarray} \label{eqn:eslast}
H^1(S,\ker \eta) \ra H^1(S,\Aa) \stackrel{\eta}{\ra} H^1(S,\Aa).
\end{eqnarray}
But $H^1(S,\ker \eta) = 0$ by~(\ref{eqn:h1}),
and so by~(\ref{eqn:eslast}),
$\eta$ induces an injection on~$H^1(S,\Aa)$,
as was left to be shown.

\bibliographystyle{amsalpha}         

\providecommand{\bysame}{\leavevmode\hbox to3em{\hrulefill}\thinspace}
\providecommand{\MR}{\relax\ifhmode\unskip\space\fi MR }
\providecommand{\MRhref}[2]{%
  \href{http://www.ams.org/mathscinet-getitem?mr=#1}{#2}
}
\providecommand{\href}[2]{#2}

\end{document}